\definecolor{webgreen}{rgb}{0,.5,0}
\definecolor{webbrown}{rgb}{.6,0,0}
\DeclareMathOperator{\per}{per}
\begin{document}

\theoremstyle{plain}
\newtheorem{theorem}{Theorem}
\newtheorem{corollary}[theorem]{Corollary}
\newtheorem{lemma}[theorem]{Lemma}
\newtheorem{proposition}[theorem]{Proposition}

\theoremstyle{definition}
\newtheorem{definition}[theorem]{Definition}
\newtheorem{example}[theorem]{Example}
\newtheorem{conjecture}[theorem]{Conjecture}

\theoremstyle{remark}
\newtheorem{remark}[theorem]{Remark}

\title{Consecutive Power Occurrences in Sturmian Words}

\author{Jason Bell}
\address{Jason Bell\\
University of Waterloo\\
Department of Pure Mathematics\\
200 University Avenue West\\
Waterloo, Ontario \  N2L 3G1\\
Canada}
\email{jpbell@uwaterloo.ca}

\author{Chris Schulz}
\address{University of Waterloo\\
Department of Pure Mathematics\\
200 University Avenue West\\
Waterloo, Ontario \  N2L 3G1\\
Canada}
\email{chris.schulz@uwaterloo.ca}

\author{Jeffrey Shallit}
\address{University of Waterloo\\
School of Computer Science\\
200 University Avenue West\\
Waterloo, Ontario \  N2L 3G1\\
Canada}
\email{shallit@uwaterloo.ca}

\thanks{The authors were supported by Discovery Grants from the National Sciences and Engineering Research Council of Canada.}

\subjclass[2020]{68R15}

\title{Consecutive power occurrences in Sturmian words}

\maketitle

\begin{abstract}
We show that every Sturmian word has the property that the distance between consecutive ending positions of cubes occurring in the word is always bounded by $10$ and this bound is optimal, extending a result of Rampersad, who proved that the bound $9$ holds for the Fibonacci word.  We then give a general result showing that for every $e \in [1,(5+\sqrt{5})/2)$ there is a natural number $N$, depending only on $e$, such that every Sturmian word has the property that the distance between consecutive ending positions of $e$-powers occurring in the word is uniformly bounded by $N$. \end{abstract}

\section{Introduction}

In this note we are concerned with the occurrences of powers in words.   
We say that a length-$n$ word $w = w[0..n-1]$ has \emph{period} $p$ if
$w[i]=w[i+p]$ for $i\in \{0,\ldots ,n-p-1\}$.  The smallest such
period is called {\it the\/} period and is written
$\per(w)$.   The exponent of a length-$n$ word $w$ is
defined to be $\exp(w) = n/\per(w)$. 
For example, $\exp({\tt entente}) = 7/3$.  For a real number $e \geq 1$, we say a word $w$
is an $e$-\emph{power} if $\lceil e \per(w) \rceil$ is equal to $|w|$, the length of $w$.  In particular, a finite word being an $e$-power and having exponent $e$ are not in general the same: the exponent is necessarily rational, while there exist $e$-powers for every real number $e\ge 1$.
A $2$-power is called a {\it square}, like the English word {\tt murmur}.  A $3$-power is called a {\it cube}, like the English word
{\tt shshsh}.

Let $\alpha = (1+\sqrt{5})/2$ be the golden ratio and let
$\bf x$ be any infinite word.
In 1997 the third author of this paper conjectured,
and Mignosi, Restivo, and Salemi
later proved \cite{Mignosi&Restivo&Salemi:1998}, that
every sufficiently long prefix of $\bf x$
has a suffix of exponent $\geq \alpha^2$ 
if and only if $\bf x$ is ultimately
periodic.  This is an example of the ``local periodicity implies
global periodicity'' phenomenon.

Furthermore, the constant $\alpha^2$ is best possible.  
Let ${\bf f} = {\bf f}[0] {\bf f}[1] {\bf f}[2] \cdots = 01001010\cdots$ denote the infinite Fibonacci word \cite{Berstel:1986b}, the fixed
point of the morphism sending $0 \rightarrow 01$ and $1 \rightarrow 0$.
Mignosi, Restivo, and Salemi  \cite{Mignosi&Restivo&Salemi:1998}
also showed that for all $\epsilon > 0$ every sufficiently long
prefix of $\bf f$ has a suffix that is an $\alpha^2 - \epsilon$ power.  For example, there is a square suffix of every prefix of length $n \geq 6$ in {\bf f}, and there is 
a ${5\over2}$-power suffix of every prefix of length $n \geq 220$ in $\bf f$.
An explicit version of this result was recently proved
by the third author \cite{Shallit:2023b}.

More generally, given an infinite word $\bf x$ and
an exponent $e$, one can consider
the list of all positive integers $p_1 < p_2 < \cdots$ marking
the position where some $e$-power in
$\bf x$ ends.  For example, for the Fibonacci word we have
\begin{itemize}
\item $(010)^3$ ends at position $13$
\item $(01001)^3$ ends at position $22$
\item $(10010)^3$ ends at position $23$
\item $(010)^3$ ends at position $26$
\item $(010)^3$ ends at position $34$
\end{itemize}
and so forth.

Recently Rampersad \cite{Rampersad:2023} obtained an explicit description
of the positions where cubes end in the Fibonacci word $\bf f$.

In particular, he observed that if $p_1 < p_2$ are two consecutive
positions where cubes end in $\bf f$, then $p_2 - p_1 \leq 9$, and 
more precisely $p_2 - p_1 \in \{ 1,2,3,4,8,9\}$.  
(Note that he phrased his discussion in terms of the lengths of {\it runs\/} of consecutive positions where there are {\it no cubes ending}, so the numbers
he presented differ by $1$ from ours.)

Since the Fibonacci word is the simplest of a much larger class of binary infinite words
called Sturmian words, this naturally raises the question of whether
a similar result holds for this much larger class.
Sturmian words have several equivalent descriptions, as follows:
\begin{itemize}
\item[(a)]  Infinite words of the form
$(\lfloor (n+1) \gamma + \beta \rfloor - \lfloor n \gamma + \beta \rfloor)_{n \geq 1}$
for real $0<\gamma < 1$, $0 \leq \beta < 1$, where $\gamma$ is irrational.   Here $\gamma$ is called the
{\it slope\/} of the word and $\beta$ is called the \emph{intercept}.

\item[(b)] Infinite words having exactly $n+1$ distinct factors
of length $n$ for all $n \geq 0$.  Here by a {\it factor\/} we mean a consecutive block lying inside the word.

\item[(c)] Infinite aperiodic binary balanced words, that is, words such that
for all factors $x,y$ of the same length, and all $a \in \{0,1\}$,
the inequality $\left| |x|_a - |y|_a \right| \leq 1$ holds, where $|z|_a$ denotes the number of occurrences of the letter $a$ in $z$.
\end{itemize}
For more information about this class of words, see, for
example, \cite[Chap.~2]{Lothaire:2002}.

In this note we generalize Rampersad's result to all Sturmian words.

\begin{theorem}
Let $\bf x$ be a Sturmian word.  Then the maximum gap between positions where cubes end in $\bf x$ is at most $10$, and this bound is optimal. 
\label{thm1}
\end{theorem}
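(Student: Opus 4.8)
The plan is to reduce the continuous family of Sturmian words to a finite, checkable combinatorial problem via the theory of Sturmian words and their continued-fraction / Ostrowski representations. The key structural fact I would exploit is that cubes (and their ending positions) in a Sturmian word of slope $\gamma$ are governed by the continued fraction expansion of $\gamma$, and in particular by the sequence of intermediate convergents. The first step is to recall the standard description of squares and higher powers in Sturmian words (for example via the singular words or the hierarchy of Sturmian factors indexed by the convergents $p_k/q_k$): the factors of a Sturmian word are the same for all intercepts $\beta$, so the \emph{set} of ending positions of cubes depends only on the slope $\gamma$ through its combinatorial structure, while $\beta$ only shifts which occurrences appear. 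This lets me decouple the two parameters.

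Next I would set up a mechanical/automatic approach rather than a purely hand combinatorial one, since the paper's toolkit (Walnut-style automatic theorem proving on Sturmian words, building on Rampersad's method for $\mathbf{f}$) is the natural engine here. The idea is to express the predicate ``position $n$ is the ending position of some cube'' as a first-order formula over the Ostrowski-$\gamma$ numeration system, and then to express ``the gap to the next cube-ending position is $g$'' as a further formula. Because Sturmian words of arbitrary slope can be handled uniformly in this numeration framework, one obtains a single finite automaton (uniform in $\gamma$) whose accepting behaviour encodes exactly the gaps that can occur. The upper bound $10$ would then follow by verifying that the automaton never accepts a gap exceeding $10$, and optimality would follow by exhibiting a specific slope and position realizing gap $10$.

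The concrete key steps, in order, are: (1) fix notation for the Ostrowski representation and recall that membership ``$n$ ends a cube'' is an automatic predicate uniformly across Sturmian slopes; (2) write down the formula $\varphi(n,g)$ asserting that $n$ and $n+g$ are consecutive cube-ending positions (i.e.\ both end cubes and no position strictly between them does); (3) bound the range of $g$ by showing that for $g \geq 11$ the formula is unsatisfiable, which is the crux; and (4) construct an explicit witness slope $\gamma$ (I would expect a slope with a suitable continued fraction, plausibly one with large partial quotients in the right place, or even a perturbation of the golden mean) together with an intercept $\beta$ and a position $n$ where the gap is exactly $10$, thereby proving optimality and simultaneously explaining why Rampersad's bound of $9$ for $\mathbf{f}$ is strictly smaller.

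The hard part will be step (3): establishing the uniform upper bound $10$ over all slopes simultaneously. For the Fibonacci word the slope is fixed and Rampersad could work with a single numeration system and a single automaton; here the continued fraction is arbitrary, so I must argue that no choice of partial quotients can push the gap past $10$. I anticipate this requires either a genuinely slope-uniform automatic framework (so that one finite computation settles all slopes at once) or a careful case analysis on the local continued-fraction data around the relevant positions, using balancedness and the bounded-exponent structure of Sturmian factors to rule out long cube-free stretches. Controlling the interaction between large partial quotients (which create long ``almost periodic'' blocks) and the requirement that cubes still end frequently is where the real work lies, and I would expect the extremal gap $10$ to arise precisely at a transition between scales in the Ostrowski expansion.
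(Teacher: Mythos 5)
Your step (3) --- the uniform upper bound over all slopes --- is precisely where your proposal has a genuine gap, and you acknowledge the difficulty without closing it. You ask for ``a single finite automaton (uniform in $\gamma$)'' encoding cube-ending positions across all Sturmian slopes; but the fixed-slope Ostrowski/Walnut technology underlying Rampersad's argument does not provide this, and the existence of a slope-uniform decision procedure for Sturmian words is a substantial theorem in its own right (such a framework does exist in recent literature, but you would have to invoke it explicitly, and it is far heavier than what is needed). Your fallback, ``a careful case analysis on the local continued-fraction data,'' is not sketched at all. So neither branch of the crux is carried out, and the heart of the theorem --- that no choice of partial quotients forces a gap exceeding $10$ --- remains unproven. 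A second soft spot: your decoupling claim that the set of cube-ending positions ``depends only on the slope'' is wrong as stated (the positions certainly depend on the intercept $\beta$); what is intercept-independent is the set of \emph{factors}, and to turn the gap question into a statement about factors you must first know that cubes of \emph{bounded period} already occur with bounded gaps --- otherwise ``no cube ends strictly between $n$ and $n+g$'' is not a property of any bounded-length window. You never establish such a period bound.

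The paper closes exactly this gap by an elementary finite reduction that bypasses slope-uniform automata entirely. Every factor of a Sturmian word is a balanced binary word, so it suffices to study balanced words: the key lemma, verified by enumerating the $594$ balanced binary words of length $17$, is that every such word contains a cube of period at most $5$. Consequently every balanced word of length $32$ contains two \emph{distinct} cube occurrences of period at most $5$ (the cubes in its length-$17$ prefix and suffix overlap in at most two symbols), so every possible gap is witnessed inside some balanced word of length $32$; enumerating all $3650$ of them gives the bound $10$, uniformly in both slope and intercept. Only the optimality half of the paper uses the machinery you propose, and there it is a fixed-slope computation of the kind you describe in your step (4): Walnut over the Pell (Ostrowski) numeration system for the characteristic word of slope $\sqrt{2}-1$ and intercept $0$, whose gap set is exactly $\{1,7,10\}$ --- so the extremal slope is $\sqrt{2}-1$, not a perturbation of the golden mean. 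To rescue your plan you would need either to import the slope-uniform decision procedure as a black box, or to prove a bounded-period lemma of the above type; the latter is the missing idea that makes a finite enumeration, and hence the whole theorem, go through.
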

In fact, our proof shows that the gap between consecutive occurrences of cubes having period at most $5$ is at most $10$ for every Sturmian word; furthermore, the proof shows that the optimal bound $10$ is achieved by the Sturmian
characteristic word with slope $\sqrt{2}-1$ and intercept $0$.

It is natural to ask whether a similar result to Theorem \ref{thm1} holds for powers other than cubes. It is known that the Fibonacci word does not have $\beta$-powers for $\beta=(5+\sqrt{5})/2$, and so it is necessary to restrict to exponents that are less than $\beta$ when considering the gap question over all Sturmian words. Once this condition is imposed, however, we are able to again prove a general result about the existence of uniform bounds on gaps of $e$-powers in Sturmian words for every $e<\beta$.

\begin{theorem}
Let $\bf x$ be a Sturmian word and let $e\in [1,(5+\sqrt{5})/2)$. Then there are natural numbers $N$ and $k$, depending only on $e$, such that the maximum gap between the ending positions of $e$-powers of
period at most $k$ in $\bf x$ is at most $N$.
\label{thm:genexp}
\end{theorem}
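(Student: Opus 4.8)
The plan is to work with the mechanical description (a) and to translate the problem into one about a circle rotation. Writing $\theta_n = \{n\gamma + \beta\}$, the letter $\mathbf{x}[n]$ is determined by which side of the two-point partition $\{0, 1-\gamma\}$ the point $\theta_n$ lies on, and one checks that $\mathbf{x}[m] \neq \mathbf{x}[m+p]$ exactly when $\theta_m$ falls in one of two arcs of length $\|p\gamma\|$ abutting the partition points; call their union the bad set $B_p$. Consequently an $e$-power of period $p$ ends at position $t$ if and only if the $r := \lceil ep\rceil - p$ consecutive orbit points $\theta_{t-p-r+1}, \dots, \theta_{t-p}$ all avoid $B_p$. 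Thus the whole question becomes: for a suitable period $p$, how frequently does the rotation orbit produce a run of $r$ consecutive points avoiding $B_p$? Since translating the orbit (i.e.\ changing the intercept $\beta$) does not affect the lengths of these runs, any bound I obtain will automatically be uniform in $\beta$, and it remains only to control the dependence on the slope $\gamma$.

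The second step is to choose the period. Here I will use the exact value of the largest exponent of a factor of period $q_k$, namely $a_{k+1} + 2 + (q_{k-1}-2)/q_k$, where $[0;a_1,a_2,\dots]$ is the continued fraction expansion of $\gamma$ and $q_k$ is the $k$-th convergent denominator. Because $e < (5+\sqrt 5)/2 = 2+\alpha$ and the all-ones expansion minimizes the critical exponent, I can fix once and for all an index $m = m(e)$ for which the Fibonacci value $3 + (q_{m-1}-2)/q_m$ already exceeds $e$, and set $k(e)$ to be a fixed multiple of $q_m$. I then claim that every slope admits a convergent denominator $p = q_j \le k(e)$ whose associated maximal exponent exceeds $e$ by a fixed margin. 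The argument is a dichotomy: if some early partial quotient is large enough, a short convergent denominator already carries a high power; otherwise the expansion must begin with enough $1$'s to force $\mathbf{x}$ to share all its short factors with the Fibonacci word, and then the period-$q_m$ power supplied by the Fibonacci computation does the job. Either way the chosen $p$ is bounded solely in terms of $e$.

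With $p$ fixed, the final step is to bound the gaps in the set of ending positions of period-$p$ $e$-powers. I apply the three-distance theorem to the orbit and the bad set $B_p$: the visits of the orbit to $B_p$ have gaps taking at most three values, so the maximal runs avoiding $B_p$ are controlled, and the fact that the maximal exponent exceeds $e$ by a fixed margin guarantees that runs of length at least $r$ occur on a set of positions of density bounded below by a positive constant $c(e)$, uniformly in $\gamma$. Equidistribution of the orbit then converts this density lower bound into a gap upper bound $N(e)$, completing the proof.

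I expect the main obstacle to be precisely this last uniformity: showing that the density of sufficiently long good runs is bounded below by a constant depending only on $e$ and not on the slope. The delicate point is that the $r$ orbit points in a run are far from independent, so the naive estimate $(1-2\|p\gamma\|)^r$ cannot be used; instead one must extract from the three-distance structure an honest lower bound on the measure of the set of run-starts, and then verify that the margin built into the choice of $m(e)$ (coming from the strict inequality $e < 2+\alpha$) yields a slack that survives uniformly across both cases of the dichotomy, including slopes with very large partial quotients, where long locally periodic stretches must still be shown to contain the short powers densely.
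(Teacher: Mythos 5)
Your first step (the translation to circle rotations and the bad set $B_p$) is fine, but the argument fails at its final step, and the failure is not the one you flag. You assert that a uniform lower bound on the \emph{density} of positions starting a good run, ``together with equidistribution,'' yields an upper bound on \emph{gaps}. That inference is invalid: positive density (even uniform Banach density) of a set of natural numbers does not imply bounded gaps, and equidistribution is precisely a density statement, never a syndeticity statement. The quantitative fact you would actually need --- that the return time of the orbit $(\theta_t)$ to a finite union of arcs $G$ is bounded by a function of the measure of $G$ (or of the lengths of its components), uniformly in $\gamma$ --- is false for circle rotations: for $\gamma\approx 1/N$ with $N$ huge, the orbit advances by only $\gamma$ per step, so the return time to an arc of length $1/10$ exceeds $(9/10)N$; more generally a huge partial quotient $a_{k+1}$ forces waiting times of order $q_{k+1}$ to reach arcs of length comparable to $1/q_k$. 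So even granting the uniform density bound that you identify as the main obstacle, the conclusion does not follow; what is needed is control of return times, which depends on the continued fraction of $\gamma$ at every scale and not on measure.

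There is a second structural problem: you fix a \emph{single} period $p=q_j$ per slope and try to bound gaps between $e$-powers of that one period. By the Fine--Wilf theorem, a word of length $\lceil ep\rceil$ with period $p$ lying inside a stretch of period $q$ with $\gcd(p,q)=1$ and $p+q-1\le \lceil ep\rceil$ would have to be constant; hence a long stretch $s^M$ with $|s|=q_{j'}$ in the resonance window $p<q_{j'}\le \lceil ep\rceil-p+1$ contains \emph{no} $e$-power of period $p$ at all, and such stretches are arbitrarily long whenever $a_{j'+1}$ is huge. Your chosen $p$ must therefore also dodge resonances created by \emph{later} partial quotients, and nothing in your dichotomy does this; indeed this is exactly why Theorem~\ref{thm1} works with cubes of periods $3$ \emph{and} $5$ simultaneously (the cube $(010)^3$ cannot occur inside $(01001)^M$), and why Theorem~\ref{thm:genexp} is stated for the family of periods at most $k$ rather than one period. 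Your dichotomy is also not exhaustive as stated: the absence of a large early partial quotient does not force the expansion to begin with $1$'s (all partial quotients could equal $2$, as for slope $\sqrt{2}-1$), and that middle case requires an analysis of the ratios $q_{k-1}/q_k$, not factor-sharing with the Fibonacci word. For comparison, the paper's proof avoids all of these issues non-constructively: K\"onig's infinity lemma plus Furstenberg's theorem (Lemma~\ref{lem:h}) reduce the statement to the Damanik--Lenz theorem that every Sturmian word contains an $e$-power for $e<(5+\sqrt{5})/2$, at the price of producing no explicit $N$ and $k$; a corrected version of your approach would be more informative but needs the missing return-time and resonance arguments supplied.
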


\section{Proof of Theorem~\ref{thm1}}

We begin with a lemma.
As it turns out, it suffices to consider words of bounded period.

\begin{lemma}
Every balanced binary word of length $17$ contains a cube of period
at most $5$ in it, and the bound $17$ is best possible.\label{lem:1}
\end{lemma}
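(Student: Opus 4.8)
The plan is to prove the claim in two parts: first establish that every balanced binary word of length $17$ contains a cube of period at most $5$, and then exhibit a balanced word of length $16$ that contains no such cube, showing $17$ is best possible.

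\medskip

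For the main implication, the key observation is that balanced binary words are exactly the factors of Sturmian words, and their structure is highly constrained. The standard approach I would take is a finite case analysis made rigorous by the balance condition. First I would note that since we only care about cubes of period at most $5$, the relevant periods to check are $p \in \{1,2,3,4,5\}$, and a cube of period $p$ occupies $3p$ letters, so a cube of period $5$ needs $15$ positions. The balance property forces that in any window, the number of $1$'s (say) in factors of equal length differs by at most $1$; this means over a length-$17$ word the letter frequencies are tightly pinned to the slope $\gamma$ of the underlying Sturmian word. The cleanest rigorous route is a direct finite computation: enumerate all balanced binary words of length $17$ and verify each contains a cube of period $\le 5$. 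Because balanced words of a given length are polynomial in number (there are $O(n^2)$ balanced binary words of length $n$, arising from the distinct factors of all Sturmian words, which by description (b) number $n+1$ of each length across a one-parameter family), this enumeration is entirely finite and can be carried out by machine or by hand using the Christoffel/continued-fraction description of balanced factors.

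\medskip

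More explicitly, I would generate the balanced words of length $17$ by taking, for each admissible slope interval, the factors of length $17$ of the corresponding Sturmian word; equivalently, one uses the characterization that balanced binary words are precisely factors of the form $(\lfloor (n+1)\gamma+\beta\rfloor - \lfloor n\gamma+\beta\rfloor)$ over all valid $\gamma,\beta$. For each such word I would scan all starting positions and all periods $p \le 5$ to detect a cube. The reason a cube of period at most $5$ must appear is essentially a pigeonhole/density argument: a balanced word avoiding all short cubes would have to avoid too many local repetitions simultaneously, and the balance constraint leaves no room for this once the length reaches $17$. I would expect the verification to reduce, after symmetry (swapping $0 \leftrightarrow 1$ and reversal), to a small number of representative words.

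\medskip

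The main obstacle, and the step requiring the most care, is the optimality claim: producing a specific balanced word of length $16$ with \emph{no} cube of period at most $5$ (and ideally no cube at all, to make the bound sharp). I would search for this witness among factors of low-complexity Sturmian words — the remark in the paper pointing to the characteristic word of slope $\sqrt{2}-1$ strongly suggests the extremal behavior lives there, so I would first inspect length-$16$ factors of that word. Verifying that a candidate length-$16$ word is both balanced and cube-free for periods up to $5$ is a routine but error-prone check that I would confirm by explicit inspection of all $16$ starting-position/period combinations. The conceptual content is entirely in the first part; the optimality is an existence statement settled by a single well-chosen example.
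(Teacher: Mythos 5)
For the first half, your plan coincides with the paper's proof, which consists entirely of a finite machine check: ``we enumerate all $594$ binary balanced words of length $17$ and check.'' Your surrounding remarks are harmless but not load-bearing --- the ``pigeonhole/density'' intuition is not a proof, and your count of balanced words is off (the number of balanced binary words of length $n$ grows like $n^3/\pi^2$, not $O(n^2)$; there are exactly $594$ of length $17$) --- but since you ultimately delegate everything to the exhaustive enumeration, this part is sound and essentially identical to the paper.

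The genuine gap is in the optimality claim. You never produce a length-$16$ witness; you only say where you would look, namely among the length-$16$ factors of the characteristic word of slope $\sqrt{2}-1$, on the grounds that the paper's remark suggests the extremal behavior ``lives there.'' That search is guaranteed to fail: the slope-$(\sqrt{2}-1)$ word has exactly $17$ factors of length $16$, and every one of them contains a cube of period at most $5$. Concretely, writing that word as a product of the blocks $01$ and $001$, two $001$ blocks are never adjacent, so consecutive occurrences of $00$ are always at distance $5$ or $7$; checking the $17$ factors directly, each contains $010101$, $101010$, or a conjugate of $(01010)^3$ --- for instance the factor $0101001010010100$ (positions $25$--$40$) begins with the cube $(01010)^3$. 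By contrast, the paper's witness $0010100101001001$ contains $00100$, i.e., two occurrences of $00$ at distance $3$, so it is \emph{not} a factor of the slope-$(\sqrt{2}-1)$ word at all. The point you missed is that this word is extremal for the \emph{gap} bound of $10$ in Theorem~1, but not for the question of the longest balanced word avoiding short cubes; the witness lives in Sturmian words of a different slope. As written, your strategy for optimality cannot be completed: you would need either to exhibit an explicit witness (as the paper does) or to search over \emph{all} balanced words of length $16$, not just the factors of one particular Sturmian word.
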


\begin{proof}
We enumerate all $594$ binary balanced words of length $17$ and check.
For length $16$, the word $0010100101001001$ is balanced but contains no cube.
\end{proof}

\begin{proof}[Proof of Theorem 1]
We claim that every balanced binary word of length $32$ 
 contains at least two consecutive cube occurrences, each of period at most $5$.    From Lemma \ref{lem:1}, we know the prefix of length 17 contains a cube of period 5, and so does the
suffix of length 17.   Since in a word of length $32$ these two cubes can overlap in at most two symbols, they must be distinct cube occurrences.  Thus the maximum possible gap must appear in some balanced word of length $32$.   It then suffices to examine all balanced words of this length (there are 3650) and compute all gaps
between consecutive ending positions of cubes of period
$5$ in all of them.  The longest is $10$.

To prove that $10$ is optimal, we use the theorem-prover {\tt Walnut}; see \cite{Mousavi:2016,Shallit:2023} for more details about it. We now consider the Sturmian
characteristic word with slope $\sqrt{2}-1$ and intercept $0$. This word has an associated Ostrowski numeration system based on the Pell numbers, $1,2,5,12,29,70,169,\ldots$; this system is built-in to {\tt Walnut} and is invoked by saying {\tt msd\_pell} (see
\cite{Baranwal:2020,
Baranwal&Schaeffer&Shallit:2021,
Baranwal&Shallit:2019}). We now use the following {\tt Walnut} code:

\begin{verbatim}
reg odd0 msd_pell "(0|1|2)*(1|2)0(00)*":
def root2 "?msd_pell (n>=0) & $odd0(n+1)":
combine RT2 root2:
# make a DFAO RT2 for Sturmian sequence w/slope sqrt(2)-1 

def end_in_cube "?msd_pell Ei,j (j>=1) & (i+3*j=n) & 
   At (t<2*j) => RT2[i+t]=RT2[i+j+t]":
# is there a cube ending at position n

def two_consec "?msd_pell (i<j) & $end_in_cube(i) & 
   $end_in_cube(j) & At (i<t & t<j) => ~$end_in_cube(t)":
# are i and j two consecutive cube-ending positions?

def two_consec_dist "?msd_pell (n>0) & Ei,j i+n=j & $two_consec(i,j)":
# it accepts only 1, 110 and 200, i.e., 1, 7 and 10 in Pell
\end{verbatim}
The code first creates a deterministic finite-state automaton with output, named {\tt RT2}, for the Sturmian sequence $\bf x$ with slope $\sqrt{2}-1$ and intercept $0$. It next determines the set of natural numbers $n$ for which there is a cube occurring in $\bf x$ that ends at position $n$. Finally, the code finds the set of gaps between consecutive ending positions of cubes and determines that the only gaps that occur are the natural numbers whose expansions in the Pell numeration system are $1, 110,$ and $200$. These are the numbers $1, 7, 10$ and so the bound of $10$ is best possible.
\end{proof}
\begin{remark} We note that the proof in fact shows the slightly stronger claim that the gap between consecutive occurrences of cubes of period at most $5$ is at most $10$ for every Sturmian word and that the bound $10$ is achieved by the Sturmian
characteristic word with slope $\sqrt{2}-1$ and intercept $0$.
\end{remark}
\section{General exponents}

We now give the proof of Theorem \ref{thm:genexp}. We recall that a {\it partial function\/} $f$ from a set $X$ to a set $Y$ is simply a map defined on a non-empty subset $U$ of $X$ that maps $U$ into the set $Y$.  In the case that the map is defined on all of $X$, the map is called a {\it total function}.  We write $f:X\rightharpoonup Y$ for a partial function from $X$ to $Y$.

Given two partial functions $f,g: \mathbb{N}\rightharpoonup \mathbb{R}_{\ge 0}$, we declare that $f\preceq g$ if $f(i)\le g(i)$ for all $i$ in the domain of both $f$ and $g$.  We note that $\preceq$ is not a partial order on partial functions, but it is, however, a partial order on total functions.  We recall that for a right-infinite word $\bf x$ over a finite alphabet, we have a subword complexity function $p_{{\bf x}}$ whose value at $n$ is the number of distinct length-$n$ factors of ${\bf x}$.  If $w$ is a finite word, we have a partial function $p_w:\mathbb{N}\rightharpoonup \mathbb{N}$ that is defined on the nonnegative integers that are less than or equal to the length of $w$ and whose value at $i$ is the number of factors of $w$ of length $i$ for all $i$ less than or equal to the length of $w$.

\begin{lemma}
Let $\Sigma$ be a finite alphabet, let $h:\mathbb{N}\to \mathbb{N}$ be a total function, and let $S$ be a non-empty set of words over $\Sigma$.  Suppose that every uniformly recurrent word ${\bf x}$ with $p_{\bf x}\preceq h$ has the property that ${\bf x}$ contains some word from $S$ as a factor.
Then there exists a finite subset $S_0$ of $S$ and a natural number $N$,
depending only on $h$ and $S_0$,
such that, whenever ${\bf x}$ is a right-infinite word with $p_{\bf x}\preceq h$, every factor of ${\bf x}$ of length $N$ contains a factor from $S_0$.
\label{lem:h}
\end{lemma}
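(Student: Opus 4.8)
The plan is to argue by compactness in the full shift space $\Sigma^{\mathbb{N}}$ of right-infinite words, endowed with the product topology, which is compact because $\Sigma$ is finite. First I would fix the compact set in which everything takes place. Let $X_h = \{ \mathbf{x} \in \Sigma^{\mathbb{N}} \suchthat p_{\mathbf{x}} \preceq h \}$ be the set of right-infinite words of complexity bounded by $h$. I claim $X_h$ is closed, hence compact: if $\mathbf{x}_k \to \mathbf{x}$ with each $\mathbf{x}_k \in X_h$, then for each $n$ all finitely many length-$n$ factors of $\mathbf{x}$ already occur within a sufficiently long prefix, and that prefix is shared by $\mathbf{x}_k$ for large $k$, so $p_{\mathbf{x}}(n) \le p_{\mathbf{x}_k}(n) \le h(n)$. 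Since both functions are total, this gives $p_{\mathbf{x}} \preceq h$.

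The second step is to upgrade the hypothesis from uniformly recurrent words to every word of $X_h$. Given $\mathbf{x} \in X_h$, I would pass to a nonempty, closed, shift-invariant minimal subset of the subshift generated by $\mathbf{x}$ (the set of words all of whose factors are factors of $\mathbf{x}$); such a minimal subset exists by Zorn's lemma. Any word $\mathbf{y}$ it contains is uniformly recurrent, and all factors of $\mathbf{y}$ are factors of $\mathbf{x}$, so $p_{\mathbf{y}} \preceq p_{\mathbf{x}} \preceq h$. The hypothesis then furnishes a factor $s \in S$ of $\mathbf{y}$, and since factors of $\mathbf{y}$ are factors of $\mathbf{x}$, the word $\mathbf{x}$ itself contains $s$. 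Hence every element of $X_h$ contains some factor from $S$.

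The third step extracts $S_0$. For each $s \in S$ the set $U_s = \{ \mathbf{x} \in X_h \suchthat s \text{ is a factor of } \mathbf{x} \}$ is open in $X_h$, and by the previous step $\{U_s\}_{s \in S}$ covers $X_h$; compactness yields finitely many $s_1,\dots,s_m$ with $X_h = U_{s_1} \cup \cdots \cup U_{s_m}$, and I set $S_0 = \{s_1,\dots,s_m\}$, a finite subset of $S$ such that every word of $X_h$ contains a factor from $S_0$. The fourth step produces the uniform window $N$ by a second compactness argument. If no $N$ worked, then for each $N$ some word of $X_h$ would have a length-$N$ factor containing no word of $S_0$; shifting that factor to the front gives $\mathbf{z}_N \in X_h$ (a suffix of a word in $X_h$ is again in $X_h$) whose length-$N$ prefix avoids $S_0$. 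A convergent subsequence $\mathbf{z}_{N_k} \to \mathbf{z} \in X_h$ then produces a word containing no word of $S_0$ as a factor: any occurrence of some $s \in S_0$ in $\mathbf{z}$ would sit in a finite prefix shared by $\mathbf{z}_{N_k}$ for large $k$, contradicting that its length-$N_k$ prefix avoids $S_0$. This contradicts the third step. Because the whole argument refers only to $X_h$ and $S_0$, the resulting $N$ depends only on $h$ and $S_0$.

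I expect the one genuinely non-routine point to be the second step: the hypothesis is granted only for uniformly recurrent words, whereas the two compactness extractions require it for every word of bounded complexity. The minimal-subshift argument, together with the fact that passing to a minimal subset can only shrink the complexity, is precisely what bridges this gap; the closedness of $X_h$ and the two covering and subsequence extractions are then standard compactness bookkeeping.
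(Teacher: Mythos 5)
Your proof is correct, and it rests on the same two pillars as the paper's proof: compactness of the space of infinite words over a finite alphabet, and the fact that every subshift contains a uniformly recurrent word. The organization, however, is genuinely different. The paper works with \emph{finite} words: it defines $U_n$ to be the set of length-$n$ words $u$ with $p_u\preceq h$ (this is the reason the partial-function order $\preceq$ was introduced) and with no factor in $S$, notes that each $U_n$ is factor-closed, and runs a single contradiction: if every $U_n$ were nonempty, K{\"o}nig's infinity lemma would produce an infinite word of complexity $\preceq h$ avoiding $S$, and Furstenberg's theorem (cited, not proved) would then give a uniformly recurrent such word, contradicting the hypothesis. The first empty $U_N$ yields both the constant $N$ and the finite set $S_0=\{s\in S \suchthat |s|\le N\}$ in one stroke. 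You instead work directly in the compact space $X_h\subseteq \Sigma^{\mathbb{N}}$, first upgrading the hypothesis to \emph{all} words of $X_h$ via the Zorn's-lemma construction of a minimal subshift (this is precisely the content of the paper's Furstenberg citation, which you prove rather than cite), and then perform two separate compactness extractions: an open-cover argument producing $S_0$ and a sequential-compactness argument producing $N$. Your route never needs the complexity function of a finite word, so the partial-function formalism becomes unnecessary, and each of your steps is an independently reusable statement; the paper's route uses compactness only once and gives a concrete description of $S_0$ (all sufficiently short elements of $S$) rather than an abstract finite subcover. Both arguments are sound, and the nonconstructive character (no explicit bound on $N$) is the same in each.
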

\begin{proof}
Let $U_n$ denote the set of words $u$ over $\Sigma$ of length $n$ such that $p_u\preceq h$ and such that $u$ does not have an element of $S$ as a factor.  Then $U_n$ is closed under the process of taking factors for all $n\ge 1$.  If there is some $N\ge 1$ such that $U_N$ is empty, then letting $S_0$ be the set of words in $S$ of length at most $N$, we see that
whenever ${\bf x}$ is a right-infinite word with $p_{\bf x}\preceq h$, every factor of ${\bf x}$ of length $N$ contains a factor from $S_0$.  

Thus we may assume that each $U_n$ is non-empty.  
By K{\"o}nig's infinity lemma
(see \cite{Konig:1927} or \cite[\S 2.3.4.3]{Knuth:1997}), there is a right-infinite word ${\bf x}$ with the property that all of its finite factors are in the union of the $U_i$.  In particular, $p_{\bf x}\preceq h$ and ${\bf x}$ has no words from $S$ as a factor.  Then by Furstenberg's theorem \cite{Furstenberg:1981}, there is a uniformly recurrent right-infinite word ${\bf y}$ whose finite factors are all factors of ${\bf x}$, so $p_{\bf y}\preceq p_{\bf x}\preceq h$ and ${\bf y}$ has no occurrences of factors from $S$, a contradiction.
\end{proof}

\begin{proof}[Proof of Theorem \ref{thm:genexp}]
Let $h(n)=n+1$ for $n\ge 0$ and let $P_{e}$ denote the set of words $w$ over $\Sigma$ that are $e$-powers. By work of Damanik and Lenz
\cite{Damanik&Lenz:2002}, every Sturmian word
has an element from $P_{e}$ as a factor.
Since every uniformly recurrent word whose subword complexity function is $\preceq h$ is either Sturmian or periodic and since periodic words always contain elements from $P_{e}$ as factors, Lemma \ref{lem:h} gives that there is a finite subset $Q_{e}$ of $P_{e}$ and some number $n=n(e)$ such that every length-$n$ factor of a Sturmian word $\bf x$ has an occurrence of a word from $Q_{e}$.  In particular, if we take $k$ to be the supremum of periods in $Q_{e}$ and $N=2n$, we see that gaps between consecutive $e$-powers of period at most $k$ are bounded by $N$.  
\end{proof}

\section{Other exponents}
When one considers Theorem \ref{thm:genexp}, a natural question that arises is whether one can bound the associated quantities $k$ and $N$ explicitly in terms of the exponent $e\in [1,(5+\sqrt{5})/2)$. This appears to be somewhat subtle in general, but using the same computational approach as we employed in establishing Theorem \ref{thm1}, we can obtain results for several other exponents.

For each exponent $e$ we report
the smallest $n$ such that such that every balanced word of length
$n$ contains a factor that is an $e$-power, the largest period $p$ of an $e$-power in those words, the largest gap $g$ between ending positions of $e$-powers, and a quadratic irrational $\gamma$ realizing this
gap $g$.
\begin{table}[H]
    \centering
    \begin{tabular}{c|c|c|c|c}
        $e$ & $n$ & $p$ & $g$ & $\gamma$  \\
        \hline
        $5/2$ & $9$ & $3$ & $6$ & $(5+\sqrt{5})/10$ \\
        $8/3$ & $15$ & $5$ & $9$ & $\sqrt{2}-1$ \\
        $3$ & $17$ & $5$ & $10$ & $\sqrt{2}-1$ \\
        $16/5$ & $30$ & $8$ &$17$ & $(25-\sqrt{5})/62$ \\
        $23/7$ & $50$ & $13$ & $27$ & $(59+\sqrt{5})/158$\\
        $10/3$ & $69$ & $18$ & $37$ & $(217-\sqrt{5})/298$\\
    \end{tabular}
    \caption{Data for some exponents}
    \label{tab1}
\end{table}
It would be interesting to better understand the relationship between the largest gap $g$ and the exponent $e$ as $e$ approaches $(5+\sqrt{5})/2$.

\end{document}